\documentclass[10pt]{article}




\bibliographystyle{IEEEtran}
\usepackage{cite}


\usepackage[latin1]{inputenc}  
\usepackage[T1]{fontenc}
\usepackage{ragged2e}


\usepackage{amsmath,amssymb,dcolumn,amsthm}

\usepackage{graphicx,color,transparent}
\graphicspath{{./figs/}}
\usepackage{float}

\usepackage{tikz}
\usetikzlibrary{arrows}

\tikzstyle{block} = [draw, draw=black, line width = 1pt, rectangle,
    rounded corners=4pt,
    minimum height=3em, text width=.7\columnwidth,
    inner sep = 10pt]


\usepackage{algorithm}
\usepackage{algorithmic}


\setcounter{MaxMatrixCols}{12}
\setlength{\textfloatsep}{4pt}


\makeatletter
\renewcommand{\maketag@@@}[1]{\hbox{\m@th\normalsize\normalfont#1}}%
\makeatother




\usepackage{ian_misc,ian_math,ian_opt,ian_abbrvs,ian_color,ian_expl_mpqp,axeIEEEarticle}


\title{\LARGE \bf Reduced Memory Footprint in Multiparametric Quadratic Programming by Exploiting Low Rank Structure}

\author{Isak Nielsen and Daniel Axehill%
  \thanks{I. Nielsen and D. Axehill are with the Division of Automatic
    Control, Linköping University, SE-58183 Linköping, Sweden, \texttt{isak.nielsen@liu.se, daniel.axehill@liu.se}.}}


\begin{document}
\maketitle
\thispagestyle{empty}
\pagestyle{empty}


\begin{abstract}
In multiparametric programming an optimization problem which is dependent on a parameter vector is solved parametrically. In control, multiparametric quadratic programming (\mpQP) problems have become increasingly important since the optimization problem arising in Model Predictive Control (\MPC) can be cast as an \mpQP problem, which is referred to as explicit \MPC. One of the main limitations with \mpQP and explicit \MPC is the amount of memory required to store the parametric solution and the critical regions. In this paper, a method for exploiting low rank structure in the parametric solution of an \mpQP problem in order to reduce the required memory is introduced. The method is based on ideas similar to what is done to exploit low rank modifications in generic \QP solvers, but is here applied to \mpQP problems to save memory. The proposed method has been evaluated experimentally, and for some examples of relevant problems the relative memory reduction is an order of magnitude compared to storing the full parametric solution and critical regions.
\end{abstract}


\section{Introduction}
\label{sec:intro}

In parametric programming the optimization problem is dependent on a parameter which can be thought of as input data to the optimization problem~\cite{bank82:nonlinear_par_opt}. When the optimization problem is dependent on several parameters it is referred to as multiparametric programming, and one class of such problems that has proven to be important is multiparametric quadratic programming (\mpQP) problems. See, \eg,~\cite{bank82:nonlinear_par_opt} for a survey on parametric programming. In control the importance of \mpQP has increased since it was shown in~\cite{Bemporad02:explicit_mpc} that the optimization problem in Model Predictive Control (\MPC) can be cast as an \mpQP problem and solved explicitly. 

\MPC is a control strategy where the control input in each sample is computed as the solution to a constrained finite-time optimal control (\cftoc) problem,~\cite{maciejowski2002predictive}. The \cftoc problem is solved on-line in each sample of the control loop, which requires efficient algorithms for solving the optimization problem. Examples of algorithms where the special structure in \MPC problems is exploited are~\cite{jonson83:thesis,rao98:_applic_inter_point_method_model_predic_contr,nielsen13low_rank_updates}.

Solving the \mpQP problem that corresponds to the \cftoc problem prior to the on-line execution is referred to as explicit \MPC, and the solution is explicitly given as a function of the parameter. For a strictly convex \mpQP problem, the parametric solution is a piecewise affine (\PWA) function of the parameters over polyhedral critical regions,~\cite{Bemporad02:explicit_mpc}. In~\cite{Bemporad02:explicit_mpc}, an algorithm for computing the solution to the \mpQP is presented. The on-line computational effort consists of evaluating the \PWA function for a given parameter~\cite{Bemporad02:explicit_mpc}, which allows for a division free implementation of the control law that can be computed within an a priori known worst case execution time~\cite{kvasnica15:region_free}.

However, there are limitations with \mpQP and explicit \MPC, and much focus in research has been spent to overcome these. The main limitations are the computation of the \PWA function and the critical regions, the computation of a data structure which provides efficient lookup, the memory requirement to store the parametric solution and the critical regions, and the time consumed to determine which critical region the parameter belongs to,~\cite{Bemporad02:explicit_mpc,tondel03:binary_tree,fuchs10:on_the_choice_linear,kvasnica15:region_free}. In~\cite{Bemporad02:explicit_mpc} and~\cite{tondel03:mpQP_and_eMPC} the critical regions and the corresponding optimal active sets are determined by geometric approaches for exploring the parameter space. The algorithm in~\cite{tondel03:mpQP_and_eMPC} exploits the relation between neighboring critical regions and the optimal active sets, and it is reported to avoid unnecessary partitioning. In~\cite{gupta11:novel_mpQP} an approach to solve \mpQP problems by using an implicit enumeration that determines all possible optimal active sets prior to the construction of the critical regions is proposed. The algorithm provides a partition of the full parametric space without unnecessary partitioning. In~\cite{jones06:mpLCP} a method for solving multiparametric linear complementarity problems is presented. This class of problems include \mpQP problems, but also extends to more general problems.

An algorithm which combines explicit \MPC and online \MPC is proposed in~\cite{borrelli10:linear_mpc_laws}. Here, the main algorithm is similar to a standard active set method such as the one presented in, \eg,~\cite{nocedal06:num_opt}, but the search directions are computed offline for all optimal active sets. 
For explicit \MPC it is shown in~\cite{kvasnica12:reducing_memory} that a \PWA function, which is only defined over the regions with non-saturated control inputs, together with a projection onto a non-convex set can be used to reduce the memory required to store the explicit solution. The method of implicitly enumerating all optimal active sets proposed in~\cite{gupta11:novel_mpQP} and the semi-explicit approach in~\cite{borrelli10:linear_mpc_laws} is combined in~\cite{kvasnica15:region_free}, where the authors propose an algorithm that reduces the memory footprint in explicit \MPC.

A commonly used algorithm for improving the on-line process of evaluating the \PWA function is given in~\cite{tondel03:binary_tree}. The authors propose an algorithm based on a binary search tree, which provides evaluation times that are logarithmic in the number of regions. In~\cite{herceg13:eMPC_graph} a graph traversal algorithm is used to evaluate \PWA functions and the graph is constructed while solving the \mpQP problem. In~\cite{fuchs10:on_the_choice_linear} the point location problem is solved by the use of linear decision functions, and significantly better performance in terms of computational time at a small cost of increased memory has been reported.

Furthermore, the Multiparametric Toolbox (\MPT) is an open source \textsc{Matlab}-based toolbox for multiparametric optimization problems,~\cite{MPT3}. 

The main contribution in this paper is the introduction of theory and algorithms for exploiting low rank structure in the parametric solution between neighboring critical regions for an \mpQP problem. The proposed method can significantly reduce the amount of memory required to store the solution and critical regions by exploiting that only low rank modifications of the parametric solution is obtained when making minor changes to the optimal active sets. In methods for solving general \QP problems, exploiting low rank structure has been a crucial approach to improve the performance,~\cite{nocedal06:num_opt}. However, to the authors' knowledge, this has not yet been exploited when storing the solution to \mpQP problems. The method stores the solution in a tree structure and can be incorporated directly in already existing \mpQP solvers, or be applied as a post-processing step to an already existing solution in order to reduce the required memory. Hence, the approach presented here can be interpreted as a data compression algorithm. The problem of solving the \mpQP problem has been considered in previous work by other authors and is outside the scope of this paper. 

In this paper $\posdefmats^n$ ($\possemidefmats^n$) denotes
symmetric positive (semi) definite matrices with $n$ columns, $\intset{i}{j} \triangleq \braces{i,i+1,\hdots,j}$ and symbols in sans-serif font (\eg $\timestack{x}$) denote matrices of stacked components. $|\mathcal{S}|$ denotes the cardinality of the set $\mathcal{S}$.


\section{Multiparametric Quadratic Programming}
\label{sec:mpQP}
In this section the basics of \mpQP are surveyed, and notation that will be used in the following sections is introduced.
Consider an optimization problem in the form
\begin{equation}
\minimize{\frac{1}{2} \U^T \HH \U + \p^T \g \U }{\U}{&\G \U \preceq \bI + \E \p, \quad \p \in \Theta,} \label{eq:mpQP_with_linear_cost_term}
\end{equation}
where $\U \in \realnums{\nz}$ is the optimization variable, $\p \in \realnums{\np}$ is the parameter, the cost function is determined by $\HH \in \posdefmats^{\nz}$ and $\g \in \realnums{\np \times \nz}$, the inequality constraints are given by $\G \in \realnums{\nc \times \nz}$, $\bI \in \realnums{\nc}$ and $\E \in \realnums{\nc \times \np}$, and $\Theta$ is a polyhedral set. The problem~\eqref{eq:mpQP_with_linear_cost_term} is an \mpQP problem with parameter~$\p$, see, \eg,~\cite{bank82:nonlinear_par_opt,Bemporad02:explicit_mpc}. By introducing the change of variables $\z \triangleq \U + \inv{\HH} \g^T \p$, the problem~\eqref{eq:mpQP_with_linear_cost_term} can be transformed into the equivalent \mpQP problem
\begin{equation}
\minimize{\frac{1}{2} \z \HH \z}{\z}{&\G \z  \preceq \bI + \S \p, \quad \p \in \pSet,} \label{eq:mpQP}
\end{equation}
where $\S \triangleq \E + \G \inv{\HH} \g^T \in \realnums{\nc \times \np}$. For any choice of parameter $\p \in \pSet$, the problem~\eqref{eq:mpQP} is a strictly convex \QP problem, and the necessary and sufficient optimality conditions are given by the Karush-Kuhn-Tucker (\KKT) conditions
\begin{subequations}
\label{eq:KKT_conditions}
\begin{align}
\HH \z + \G^T \lam &= 0, \label{eq:KKT_grad} \\
\G \z &\preceq \bI + \S \p, \label{eq:KKT_primal_feasibility} \\
\lam & \succeq 0, \label{eq:KKT_dual_feasibility} \\
\lama{k} \parens{\Ga{k} \z - \ba{k} - \Sa{k} \p} &= 0, \; k \in \intset{1}{\nc}, \label{eq:KKT_complementary_slackness}
\end{align}
\end{subequations}
where $\lam \in \realnums{\nc}$ are the Lagrange multipliers associated with the inequality constraints,~\cite{nocedal06:num_opt}. 

It is shown in, \eg,~\cite{Bemporad02:explicit_mpc} that the solution to the \mpQP problem~\eqref{eq:mpQP} is given by the \PWA function
\begin{equation}
\zopt(\p) = \k{i} + \K{i} \p \; \; \;\textrm{if}\; \; \;\p \in \CR{i}, \; i \in \intset{1}{R}, \label{eq:PWA_solution}
\end{equation}
where $\k{i} \in \realnums{\nz}$, $\K{i} \in \realnums{\nz \times \np}$ define the parametric solution in the polyhedral critical region $\CR{i}$ for $i \in \intset{1}{\R}$.
The \PWA function~\eqref{eq:PWA_solution} and the $\R$ critical regions are computed by solving the \mpQP problem~\eqref{eq:mpQP} parametrically, basically by considering all possible combinations of optimal active constraints, which is explained in, \eg,~\cite{Bemporad02:explicit_mpc,tondel03:mpQP_and_eMPC,gupta11:novel_mpQP}.

\subsection{Compute the solution for an optimal active set}

For any feasible choice of the parameter $\p \in \pSet$, the set of indices of constraints that hold with equality at the optimum is called the \emph{optimal active set}. Assume that no constraints in the optimal active set are weakly active, \ie, no constraints $k$ in the optimal active set have $\lam_k = 0$. How to choose the optimal active sets in the case of weakly active constraints can be seen in, \eg,~\cite{tondel03:mpQP_and_eMPC}. Let the set of all optimal active sets be denoted $\Aset$, and let the elements in $\Aset$ be denoted $\as{i}$ for $i \in \intset{1}{R}$. Each optimal active set $\as{i}$ then corresponds to a critical region $\CR{i}$,~\cite{tondel03:mpQP_and_eMPC,gupta11:novel_mpQP}. Let $\inas{i}$ be the set of inactive constraints, satisfying $\as{i} \cup \inas{i} = \intset{1}{\nc} $ and $\as{i} \cap \inas{i} = \emptyset$.


To clarify the relation between the optimal active set and the corresponding solution and critical region, let $\GA{i}$ and $\GN{i}$ be the matrices consisting of the rows in $\G$ indexed by $\as{i}$ and $\inas{i}$, respectively, and let the same hold for $\SA{i}$, $\SN{i}$, $\bA{i}$ and $\bN{i}$. Furthermore, let $\GA{i}$ have full row rank, \ie, the linear constraint qualification (\licq) holds for $\as{i}$,~\cite{tondel03:mpQP_and_eMPC}. Violation of \licq is referred to as \emph{primal degeneracy}, and if $\GA{i}\z = \bA{i} + \SA{i} \p$ are linearly independent this results in a non full dimensional critical region that in general is a facet between full dimensional regions~\cite{Bemporad02:explicit_mpc,tondel03:mpQP_and_eMPC}, and need hence not be considered here. For any parameter~$\p \in \CR{i}$, where $\CR{i}$ corresponds to $\as{i}$, the solution to the \mpQP problem~\eqref{eq:mpQP} can be computed by
parametrically solving the equality constrained \mpQP problem
\begin{equation}
\minimize{\frac{1}{2} \z \HH \z}{\z}{\GA{i} \z  &= \bA{i} + \SA{i} \p. } \label{eq:mpQP_Ai_eq_constr}
\end{equation}
The parametric solution to~\eqref{eq:mpQP_Ai_eq_constr} is an affine function in the parameter~$\p$.
Now define $\qt{i} \in \realnums{\nc}$ and $\Qt{i} \in \realnums{\nc \times \np}$ as
\begin{equation}
\qt{i} \triangleq \begin{cases}
\qt{i,\inas{i}} = 0, \\
\qt{i,\as{i}} = \q{i},
\end{cases} \quad \Qt{i} \triangleq \begin{cases}
\Qt{i,\inas{i}} = 0, \\
\Qt{i,\as{i}} = \Q{i},
\end{cases} \label{eq:introduction_tilde_notation}
\end{equation}
where $\q{i} \in \realnums{|\as{i}|}$ and $\Q{i}\in \realnums{|\as{i}| \times \np}$ are given by
\begin{equation}
\q{i} + \Q{i} \p \triangleq - \inv{\parens{\GA{i} \inv{\HH} \GA{i}^T}} \parens{\bA{i}  + \SA{i} \p}. \label{eq:qi_Qi_dual_mpQP_Ai_eq_constr}\\
\end{equation}
See, \eg,~\cite{Bemporad02:explicit_mpc} for the details. The ''tilde'' notation, introduced in~\eqref{eq:introduction_tilde_notation}, is used to denote a variable that is related to the $\nc$ constraints in the \mpQP problem~\eqref{eq:mpQP}, but with some components trivially zero. The choice of this notation will later become clear.
The parametric primal and dual solution to~\eqref{eq:mpQP_Ai_eq_constr} for $\p \in \CR{i}$ is then given by
\begin{subequations}
\label{eq:sol_mpQP_Ai_eq_constr}
\begin{align}
&\lamopt{\lam}(\p) = \qt{i} + \Qt{i} \p, \; \;  \label{eq:dual_sol_mpQP_Ai_eq_constr}\\
&\zopt(\p) = \k{i} + \K{i}\p \triangleq -\inv{\HH} \GA{i}^T \parens{\qt{i,\as{i}} + \Qt{i,\as{i}}\p}. \label{eq:primal_sol_mpQP_Ai_eq_constr}
\end{align}
\end{subequations}

\subsection{Compute the critical region for an optimal active set}

The critical region is the set of parameters for which the active set $\as{i}$ is optimal, \ie, all parameters $\p \in \pSet$ such that primal and dual feasibility given by~\eqref{eq:KKT_primal_feasibility} and~\eqref{eq:KKT_dual_feasibility}, respectively, is retained. By inserting the parametric solution~\eqref{eq:sol_mpQP_Ai_eq_constr} in the primal and dual feasibility conditions in the \KKT conditions~\eqref{eq:KKT_conditions}, the critical region $\CR{i}$ is defined by 
\begin{equation}
\CR{i} \!\triangleq \! \big\{\!\p \! \in \! \pSet \, | \, \GN{i}\! \parens{\k{i} \!+\! \K{i} \p }\! \preceq\! \bN{i}\! + \!\SN{i} \p, \; \qt{i,\as{i}}\! + \Qt{i,\as{i}} \p \!\succeq\! 0\big\}, \label{eq:CRi_def}
\end{equation}
which is a polyhedron with $\nc$ hyperplanes,~\cite{Bemporad02:explicit_mpc}. A double index as in, \eg, $\qt{i,\as{i}}$ denotes the components in $\qt{i}$ indexed by $\as{i}$. All hyperplanes in~\eqref{eq:CRi_def} which are redundant can be removed to obtain a description of $\CR{i}$ with the minimal number of describing hyperplanes,~\cite{Bemporad02:explicit_mpc,tondel03:mpQP_and_eMPC}. Let $\ePrimal{i}$ and $\eDual{i}$ be the indices of the describing hyperplanes related to the primal and dual feasibility conditions in~\eqref{eq:CRi_def}, respectively. Then the critical region $\CR{i}$ in~\eqref{eq:CRi_def} is equivalent to
\begin{equation}
\CR{i}\!\! =\! \!\big\{\! \p\! \in\! \pSet \, |\, \Ga{\ePrimal{i}}\!  \parens{\k{i}\! +\! \K{i} \p }\! \preceq\! \ba{\ePrimal{i}} +\! \Sa{\ePrimal{i}} \p, \, \qt{i,\eDual{i}} + \Qt{i,\eDual{i}} \p \!\succeq \!0\! \big\}. \label{eq:CRi_def_minimal}
\end{equation}
Storing the parametric solution and the minimal description of the critical regions requires $\mem{F}$ real numbers, where
\begin{equation}
\mem{F} \! \triangleq\! R\nz\parens{\np\!+\!1} + \memCR{F}, \; \, \memCR{F}\! \triangleq\! \sum_{i=1}^R \!\parens{\ePrimal{i} \!+ \!\eDual{i}} \parens{\np \! + \!1 }. \label{eq:mem_requirement_full}
\end{equation}


\section{Low Rank Changes of Parametric Solution}
\label{sec:low_rank_expl}

In this section it will be shown how the parametric solution in a neighboring region can efficiently be described by small structured modifications of the solution in the first region. Stepping over a facet between two neighboring regions corresponds to adding or removing constraints to the optimal active set,~\cite{tondel03:mpQP_and_eMPC}. Hence, an equivalent interpretation is that the parametric solution for one set of optimal active constraints can be used to describe the solution for an optimal active set where constraints have been added to, or removed from, the first one. For notational convenience the case when only one constraint is added or removed is presented in this paper. The case for $k$ constraints can be shown analogously. 


\subsection{Add one constraint to the optimal active set}
\label{subsec:add_one_constr}
Let the solution corresponding to an optimal active set $\as{i} \in \Aset$ be given by~\eqref{eq:sol_mpQP_Ai_eq_constr}. Consider the case when a constraint $\j \in \inas{i} $ is added, \ie, the active set $\as{j} = \as{i} \cup \j$ is also optimal, and $\as{j}$ hence corresponds to the critical region~$\CR{j}$. 
\begin{theorem}
\label{thm:sol_add_constr_low_rank}
Let the parametric solution of~\eqref{eq:mpQP} for the optimal active set $\as{i}$ be given by~\eqref{eq:sol_mpQP_Ai_eq_constr}. Then, the solution for $\as{j} = \as{i} \cup \j$ with $\j \in \inas{i}$ is given by
\begin{subequations}
\begin{align}
\lamopt{\lam}(\p) &= \qt{j} + \Qt{j}\p \triangleq \qt{i} + \Qt{i} \p - \dt{j} \parens{\cl_j + \vl_j^T \p}, \label{eq:dual_sol_mpQP_Aj_lowrank}\\
\zopt(\p) &= \k{j} + \K{j} \p \triangleq \k{i} + \K{i} \p + \f_j \parens{\cl_j + \vl_j^T \p}, \label{eq:primal_sol_Aj_lowrank}
\end{align}
\end{subequations}
where $\cl_j \in \realnums{}$, $\vl_j \in \realnums{\np}$, $\f_j \in \realnums{\nz}$ and $\dt{j} \in \realnums{\nc}$ with 
$\dt{j,\inas{j}} = 0$, $\dt{j,\j} = -1$ and $\dt{j,\as{i}}$ possibly non-zero. 
\end{theorem}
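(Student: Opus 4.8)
The plan is to exploit that enlarging the active set from $\as{i}$ to $\as{j}=\as{i}\cup\j$ augments the equality-constrained \KKT system behind~\eqref{eq:sol_mpQP_Ai_eq_constr} by exactly one row and one column, so that the two parametric solutions differ only by a rank-one modification. First I would write the optimality conditions for~\eqref{eq:mpQP_Ai_eq_constr} with active set $\as{i}$ as the saddle-point system
\begin{equation}
\begin{bmatrix} \HH & \GA{i}^T \\ \GA{i} & 0 \end{bmatrix} \begin{bmatrix} \z \\ \q{i}+\Q{i}\p \end{bmatrix} = \begin{bmatrix} 0 \\ \bA{i}+\SA{i}\p \end{bmatrix},
\end{equation}
whose coefficient matrix I denote $\mathcal{M}_i$; its solution is unique because $\HH\in\posdefmats^{\nz}$ and $\GA{i}$ has full row rank under \licq, and it is exactly~\eqref{eq:sol_mpQP_Ai_eq_constr}. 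Activating $\j$ then borders $\mathcal{M}_i$ with the column $w$ stacking $\Ga{\j}^T$ over a zero block (and the row $w^T$), and appends the right-hand-side entry $\ba{\j}+\Sa{\j}\p$; the enlarged unknowns are $\z$, the $\as{i}$-multipliers $\mu$ of the enlarged problem, and the new scalar multiplier $\lama{\j}$.

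Next I would solve this bordered system by block elimination against $\mathcal{M}_i$, which is invertible. Eliminating $\z$ and $\mu$ gives the multiplier of the newly activated constraint as the scalar
\begin{equation}
\lama{\j} = \frac{\Ga{\j}\parens{\k{i}+\K{i}\p}-\ba{\j}-\Sa{\j}\p}{s}, \quad s\triangleq \Ga{\j} N \Ga{\j}^T,
\end{equation}
where $N$ is the top-left block of $\inv{\mathcal{M}_i}$, i.e.\ $N=\inv{\HH}-\inv{\HH}\GA{i}^T\inv{\parens{\GA{i}\inv{\HH}\GA{i}^T}}\GA{i}\inv{\HH}$. Since the numerator is affine in $\p$, this defines $\cl_j$ and $\vl_j$ through $\lama{\j}=\cl_j+\vl_j^T\p$. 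Back-substitution then yields $\parens{\z,\mu}=\parens{\k{i}+\K{i}\p,\,\q{i}+\Q{i}\p}-\inv{\mathcal{M}_i}w\,\lama{\j}$, so the whole solution moves along the single direction $-\inv{\mathcal{M}_i}w$ scaled by the scalar affine factor $\cl_j+\vl_j^T\p$. Reading off the primal block gives $\f_j=-N\Ga{\j}^T$, hence~\eqref{eq:primal_sol_Aj_lowrank}, and the dual block gives the shift of the $\as{i}$-multipliers.

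It then remains to embed the dual change into the $\nc$-vector of~\eqref{eq:dual_sol_mpQP_Aj_lowrank}. On $\inas{j}=\inas{i}\setminus\j$ nothing changes and both tilde-duals vanish, so $\dt{j,\inas{j}}=0$; on index $\j$ the multiplier jumps from $0$ to $\cl_j+\vl_j^T\p$, which forces $\dt{j,\j}=-1$; and on $\as{i}$ the multipliers are shifted by the dual block of $\inv{\mathcal{M}_i}w$, namely $\inv{\parens{\GA{i}\inv{\HH}\GA{i}^T}}\GA{i}\inv{\HH}\Ga{\j}^T$, which is the generically nonzero $\dt{j,\as{i}}$. Collecting these three pieces reproduces~\eqref{eq:dual_sol_mpQP_Aj_lowrank}.

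The main obstacle is to guarantee that the scalar $s$ is nonzero, so that the border can be eliminated and $\cl_j,\vl_j$ are well defined. I expect to show that $s$ is the Schur complement of $\mathcal{M}_i$ in the bordered matrix and that $N$ is positive semidefinite, so that $s=\Ga{\j}N\Ga{\j}^T\ge 0$ with equality only when $\Ga{\j}^T$ lies in the range of $\GA{i}^T$; because $\as{j}$ satisfies \licq this cannot happen, hence $s>0$. Equivalently, the entire computation is the block-inverse update of $\inv{\parens{\GA{i}\inv{\HH}\GA{i}^T}}$ in~\eqref{eq:qi_Qi_dual_mpQP_Ai_eq_constr} when the row $\Ga{\j}$ is appended to $\GA{i}$. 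Beyond this, the only care needed is bookkeeping the sign conventions and the placement of components into the $\nc$-dimensional tilde vectors; the rank-one structure itself is immediate from the bordered-system viewpoint.
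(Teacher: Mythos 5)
Your proof is correct, and it reaches exactly the formulas the paper derives: your pivot $s = \Ga{\j} N \Ga{\j}^T$ is the paper's Schur complement $\C = \wo - \w^T \inv{\W}\w$, your $\lama{\j}$ is the paper's $\cl_j + \vl_j^T\p$, your $\f_j = -N\Ga{\j}^T$ agrees with the paper's $\f_j = \inv{\HH}\GA{j}^T \dt{j,\as{j}}$, and your dual shift on $\as{i}$ is the paper's $\dt{j,\as{i}} = \inv{\W}\w$. The route differs in which matrix you border: the paper stays entirely in the dual (range-space) formulation, applying the matrix inversion lemma to the enlarged matrix $\GA{j}\inv{\HH}\GA{j}^T$ with $\Ga{\j}$ appended as the last row, reading off the dual solution first and then recovering the primal from $\zopt = -\inv{\HH}\GA{j}^T\lamopt{\lamA{j}}$; you instead border the full \KKT saddle-point system and eliminate against the invertible unbordered block. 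The two are algebraically equivalent (as you note yourself), but your viewpoint buys two things the paper leaves implicit: it makes the rank-one structure self-evident, since the entire primal--dual solution moves along the single direction given by the bordering column, and it supplies an actual argument that the pivot is positive via the positive semidefiniteness of the reduced inverse Hessian $N$ together with \licq for $\as{j}$, whereas the paper simply assumes in the appendix that the bordered dual matrix is positive definite. The paper's route is slightly more economical for this specific statement because the quantities $\q{i}$, $\Q{i}$ in~\eqref{eq:qi_Qi_dual_mpQP_Ai_eq_constr} are already expressed through $\inv{\W}$, so no back-substitution step is needed.
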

\begin{proof}
For $\as{j}$, the solution to the \mpQP problem~\eqref{eq:mpQP} is given by~\eqref{eq:sol_mpQP_Ai_eq_constr}, but with index ''$j$'' instead of ''$i$''. Without loss of generality, let $\Ga{\j}$ be the last row in $\GA{j}$. To compute~\eqref{eq:qi_Qi_dual_mpQP_Ai_eq_constr}, the matrix inversion lemma~\eqref{eq:app:mtrx_inversion_lemma} in Appendix~\ref{app:lin_alg} is applied to 
\begin{equation}
\begin{split}
\begin{bmatrix}
\!\W \! \! & \! \! \w \! \\ \! \w^T \! \! & \! \! \wo \!
\end{bmatrix} \!\triangleq \! \GA{j} \inv{\HH} \GA{j}^T \! = \!\begin{bmatrix}
\GA{i} \inv{\HH} \GA{i}^T & \GA{i} \inv{\HH} \Ga{\j}^T \\ \Ga{\j} \inv{\HH} \GA{i}^T & \Ga{\j} \inv{\HH} \Ga{\j}^T 
\end{bmatrix}.
\end{split} \label{eq:def_W_w_w0}
\end{equation}
The dual parametric solution~\eqref{eq:dual_sol_mpQP_Ai_eq_constr} for $\as{j}$ is then given by 
\begin{align}
&\lamopt{\lamA{j}}(\p) = -\begin{bmatrix}
\inv{\W} \parens{\bA{i} + \SA{i} \p}  \\ 0
\end{bmatrix} - \notag \\
&\!\begin{bmatrix}
 \inv{\W} \w \parens{\inv{\C}\w^T \inv{\W}\parens{\bA{i} \!+ \!\SA{i} \p}\! -\! \inv{\C}\! \parens{\ba{\j} \!+\! \Sa{\j} \p}} \\
-\inv{\C} \w^T\inv{\W} \parens{\bA{i} + \SA{i}\p} + \inv{\C} \parens{\ba{\j} + \Sa{\j} \p}
\end{bmatrix} \! \! \! \label{eq:dual_sol_Aj_tmp}
\end{align}
where $\C$ is defined as in Appendix~\ref{app:lin_alg}.
From~\eqref{eq:qi_Qi_dual_mpQP_Ai_eq_constr} and the definition of $\W$ in~\eqref{eq:def_W_w_w0} it is clear that $-\inv{\W} \parens{\bA{i} + \SA{i} \p} = \q{i} + \Q{i}\p$. Furthermore, by defining $\cl_j$ and $\vl_j$ as
\begin{subequations}
\begin{align}
\cl_j &\triangleq \inv{\C} \parens{\w^T \inv{\W} \bA{i} - \ba{\j}} \in \realnums{}, \\
\vl_j &\triangleq \inv{\C} \parens{  \SA{i}^T \inv{\W} \w- \Sa{\j}^T} \in \realnums{\np},
\end{align}
\end{subequations}
the dual solution can compactly be written as~\eqref{eq:dual_sol_mpQP_Aj_lowrank},
where $\dt{j,\as{i}} \triangleq \inv{\W}\w \in \realnums{|\as{i}|} $, $\dt{j,\j} \triangleq -1$ and $\dt{j,\inas{j}} \triangleq 0$.

From~\eqref{eq:primal_sol_mpQP_Ai_eq_constr} it can be seen that the primal solution for the optimal active set $\as{j}$ is $\zopt(\p) = - \inv{\HH} \GA{j}^T \lamopt{\lamA{j}}(\p)$, giving
\begin{align}
&\!\!\!\!\zopt(\p) = -\inv{\HH} \GA{j}^T \parens{ \qt{i,\as{j}} + \Qt{i,\as{j}} \p - \dt{j,\as{j}}\parens{\cl_j + \vl_j^T \p} }\! = \notag\\
&\!\!\!\!\!-\!\!\inv{\HH} \GA{i}^T \!\parens{\qt{i,\as{i}} \!+\! \Qt{i,\as{i}} \p}\! +\! \inv{\HH} \GA{j}^T \!\dt{j,\as{j}}\! \parens{\cl_j\! +\! \vl_j^T \p}\!\!. 
 \label{eq:primal_sol_Aj_tmp}
\end{align}
In the second equality $\qt{i,\j} = 0$ and $\Qt{i,\j} = 0$ by definition are used. Using $ -\inv{\HH} \GA{i}^T \Big(\qt{i,\as{i}} + \Qt{i,\as{i}} \p\Big) = \k{i} + \K{i} \p$ from~\eqref{eq:primal_sol_mpQP_Ai_eq_constr} and defining $\f_j \triangleq \inv{\HH} \GA{j}^T \dt{j,\as{j}} \in \realnums{\nz}$, the primal solution~\eqref{eq:primal_sol_Aj_lowrank} is obtained from~\eqref{eq:primal_sol_Aj_tmp}.
\end{proof}
\begin{remark}
\label{rem:thm1_Ai_to_Aj_low_rank}
From Theorem~\ref{thm:sol_add_constr_low_rank} it can be seen that the parametric solution in $\CR{j}$ can be computed as a rank one modification of the parametric solution in $\CR{i}$.
\end{remark}


\begin{corollary}
\label{cor:def_CR_j_add_constr_low_rank}
Let the parametric solution to~\eqref{eq:mpQP} with $\as{j} = \as{i} \cup \j$ be given by Theorem~\ref{thm:sol_add_constr_low_rank}. Then the corresponding critical region $\CR{j}$ is given by~\eqref{eq:CRi_def}, but where the primal and dual feasibility conditions~\eqref{eq:KKT_primal_feasibility} and~\eqref{eq:KKT_dual_feasibility} are instead given by
\begin{subequations}
\label{eq:feas_Aj_lowrank}
\begin{align}
\GN{j} \parens{\k{i} + \K{i} \p} + \ft{j,\inas{j}} \parens{\cl_j + \vl_j^T \p} &\preceq \bN{j} + \SN{j} \p, \label{eq:primal_feas_Aj_lowrank} \\
\qt{i,\as{j}} + \Qt{i,\as{j}} \p - \dt{j,\as{j}} \parens{\cl_j + \vl_j^T \p} &\succeq 0, \label{eq:dual_feas_Aj_lowrank}
\end{align}
\end{subequations}
where $\ft{j} \triangleq \G \f_j \in \realnums{\nc}$ with $\ft{j,\as{i}} = 0$.
\end{corollary}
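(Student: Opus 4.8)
The plan is to derive \eqref{eq:feas_Aj_lowrank} by writing the generic critical-region description \eqref{eq:CRi_def} for the index $j$ and then substituting the low-rank expressions for the primal and dual solution furnished by Theorem~\ref{thm:sol_add_constr_low_rank}. For the primal feasibility condition I would start from $\GN{j} \parens{\k{j} + \K{j}\p} \preceq \bN{j} + \SN{j}\p$ and replace $\k{j} + \K{j}\p$ by $\k{i} + \K{i}\p + \f_j\parens{\cl_j + \vl_j^T\p}$, taken directly from \eqref{eq:primal_sol_Aj_lowrank}. This yields $\GN{j}\parens{\k{i} + \K{i}\p} + \GN{j}\f_j\parens{\cl_j + \vl_j^T\p} \preceq \bN{j} + \SN{j}\p$, and since $\GN{j}$ collects the rows of $\G$ indexed by $\inas{j}$, the product $\GN{j}\f_j$ is precisely the subvector $\ft{j,\inas{j}}$ of $\ft{j} = \G\f_j$, giving \eqref{eq:primal_feas_Aj_lowrank}. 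The dual condition is handled the same way: restricting \eqref{eq:dual_sol_mpQP_Aj_lowrank} to the components indexed by $\as{j}$ gives $\qt{j,\as{j}} + \Qt{j,\as{j}}\p = \qt{i,\as{j}} + \Qt{i,\as{j}}\p - \dt{j,\as{j}}\parens{\cl_j + \vl_j^T\p}$, and imposing $\qt{j,\as{j}} + \Qt{j,\as{j}}\p \succeq 0$ produces \eqref{eq:dual_feas_Aj_lowrank}. Thus both feasibility conditions follow by pure substitution, leaving only the assertion $\ft{j,\as{i}} = 0$ to be verified.

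To establish $\ft{j,\as{i}} = 0$ I would reuse the block structure already introduced in the proof of Theorem~\ref{thm:sol_add_constr_low_rank}. With $\Ga{\j}$ appended as the last row of $\GA{j}$, so that $\GA{j}^T = \bigl[\,\GA{i}^T \;\; \Ga{\j}^T\,\bigr]$, and recalling $\f_j = \inv{\HH}\GA{j}^T\dt{j,\as{j}}$ with $\dt{j,\as{i}} = \inv{\W}\w$ and $\dt{j,\j} = -1$, the subvector of $\ft{j} = \G\f_j$ indexed by $\as{i}$ is $\GA{i}\f_j = \GA{i}\inv{\HH}\GA{i}^T\inv{\W}\w - \GA{i}\inv{\HH}\Ga{\j}^T$. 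Substituting the definitions $\W = \GA{i}\inv{\HH}\GA{i}^T$ and $\w = \GA{i}\inv{\HH}\Ga{\j}^T$ from \eqref{eq:def_W_w_w0} collapses this to $\W\inv{\W}\w - \w = 0$. This one-line computation is the only substantive step in the argument.

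I anticipate no real obstacle, since the corollary is essentially a bookkeeping consequence of Theorem~\ref{thm:sol_add_constr_low_rank}. The conceptual reason behind $\ft{j,\as{i}} = 0$ is that the constraints in $\as{i}$ stay active throughout $\CR{j}$, so the primal perturbation $\f_j\parens{\cl_j + \vl_j^T\p}$ must lie in the null space of $\GA{i}$; this provides a coordinate-free alternative to the explicit block-inverse calculation if one prefers. The only point requiring care is the consistent bookkeeping of the index sets $\as{i}$, $\as{j}$ and $\inas{j}$, together with the convention that $\Ga{\j}$ is the last row of $\GA{j}$, so that the tilde subvectors in \eqref{eq:dual_feas_Aj_lowrank} and the partition of $\dt{j,\as{j}}$ align correctly.
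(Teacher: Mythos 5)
Your proposal is correct and follows essentially the same route as the paper: substitute the low-rank primal and dual expressions from Theorem~\ref{thm:sol_add_constr_low_rank} into the defining inequalities of $\CR{j}$, identify $\GN{j}\f_j$ with $\ft{j,\inas{j}}$, and check $\ft{j,\as{i}}=0$ from the block structure of $\GA{j}$ and $\dt{j,\as{j}}$. The only difference is that you spell out the computation $\GA{i}\f_j = \W\inv{\W}\w - \w = 0$, which the paper merely asserts ``follows from the definition of $\f_j$, $\dt{j}$, $\W$ and $\w$''.
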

\begin{proof}
The critical region $\CR{j}$ is given by~\eqref{eq:CRi_def} but with index ''$j$'' instead of ''$i$''. Hence, the dual feasibility conditions~\eqref{eq:dual_feas_Aj_lowrank} follow directly from the dual solution~\eqref{eq:dual_sol_mpQP_Aj_lowrank}. 

By inserting the primal solution~\eqref{eq:primal_sol_Aj_lowrank} into the inequality constraints of~\eqref{eq:mpQP} and defining $\ft{j} \triangleq \G \f_j$ gives
\begin{equation}
\begin{split}
&\G \parens{\k{i} + \K{i} \p} + 
\G \f_j \parens{\cl_j + \vl_j^T \p} \preceq \bI + \S \p \iff \\ &\GN{j} \parens{\k{i} + \K{i} \p} + \ft{j,\inas{j}} \parens{\cl_j + \vl_j^T \p} \preceq \bN{j} + \SN{j} \p. 
\end{split}
\end{equation}
Here it is used that $\GA{j} \parens{\k{i} + \K{i} \p} + \GA{j} \f_j \parens{\cl_j + \vl_j^T \p} = \GA{j}\parens{\k{j} + \K{j} \p} =  \bA{j} + \SA{j} \p$ by the definition of $\as{j}$. $\ft{j,\as{i}} = 0$ follows from the definition of $\f_j$, $\dt{j}$, $\W$ and $\w$.
\end{proof}
\begin{remark}
\label{rem:cor1_CRi_to_CRj_low_rank}
Note that $\GN{j} \parens{\k{i} + \K{i} \p} \preceq \bN{j} + \SN{j} \p$ are a subset of the primal feasibility conditions in $\CR{i}$, $\qt{i,\j} = 0$ and $\Qt{i,\j} = 0$. Hence, the new information in the description of $\CR{j}$ is contained in $\ft{j,\inas{j}} \in \realnums{|\inas{j}|}$ and $\dt{j,\as{j}} \in \realnums{|\as{j}|}$. Note that $\cl_j$ and $\vl_j$ are already computed for $\lamopt{\lam}(\p)$ and $\zopt(\p)$.
\end{remark}

\subsection{Remove one constraint from the optimal active set}
\label{subsec:remove_one_constr}
When a constraint is removed from the optimal active set, the parametric solution and the description of the critical region change in a similar way as in Section~\ref{subsec:add_one_constr}. 

\begin{theorem}
\label{thm:sol_rem_constr_low_rank}
Let the solution for the optimal active set $\as{j}$ be given by~\eqref{eq:sol_mpQP_Ai_eq_constr} but with index ''$j$'' instead of ''$i$''. Then, the parametric solution for the optimal active set $\as{i} = \as{j} \backslash \j$ with $\j \in \as{j}$ is given by
\begin{subequations}
\begin{align}
\lamopt{\lam}(\p) &= \qt{i} + \Qt{i} \p \triangleq \qt{j} + \Qt{j} \p - \dt{i} \parens{\cl_i + \vl_i^T \p}, \label{eq:dual_sol_rem_Ai_lowrank} \\
\zopt(\p) &= \k{i} + \K{i}\p \triangleq \k{j} + \K{j} \p + \f_i \parens{\cl_i + \vl_i^T \p}, \label{eq:primal_sol_rem_Ai_lowrank}
\end{align}
\end{subequations}
where $\cl_i \triangleq \cl_j$, $\vl_i \triangleq \vl_j$, $\dt{i} \triangleq - \dt{j}$ and $\f_i \triangleq -\f_j$. Here the variables with index ''$j$'' are defined as in Theorem~\ref{thm:sol_add_constr_low_rank}.
\end{theorem}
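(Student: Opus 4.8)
The plan is to observe that removing the constraint $\j$ from $\as{j}$ to obtain $\as{i} = \as{j} \backslash \j$ is exactly the reverse of the operation analyzed in Theorem~\ref{thm:sol_add_constr_low_rank}, where $\j$ was added to $\as{i}$ to form $\as{j} = \as{i} \cup \j$. Since both statements concern the same pair of neighboring active sets related through a single index $\j$, the algebraic relationship between their two parametric solutions has in effect already been established, and the task reduces to solving those relations for the quantities carrying index $i$ rather than re-deriving anything from the \KKT conditions.

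Concretely, I would invoke Theorem~\ref{thm:sol_add_constr_low_rank} with the partition of $\GA{j}$ into $\GA{i}$ and the last row $\Ga{\j}$, so that the scalars and vectors $\cl_j$, $\vl_j$, $\dt{j}$ and $\f_j$ are defined precisely as there. This immediately supplies the dual relation~\eqref{eq:dual_sol_mpQP_Aj_lowrank} and the primal relation~\eqref{eq:primal_sol_Aj_lowrank} linking the $\as{i}$ and $\as{j}$ solutions. I would then simply rearrange these two identities to isolate the $\as{i}$ quantities, obtaining $\qt{i} + \Qt{i}\p = \qt{j} + \Qt{j}\p + \dt{j}\parens{\cl_j + \vl_j^T\p}$ from the dual relation and $\k{i} + \K{i}\p = \k{j} + \K{j}\p - \f_j\parens{\cl_j + \vl_j^T\p}$ from the primal relation.

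Matching these rearranged expressions term by term against the forms asserted in~\eqref{eq:dual_sol_rem_Ai_lowrank} and~\eqref{eq:primal_sol_rem_Ai_lowrank} then forces the identifications $\cl_i \triangleq \cl_j$, $\vl_i \triangleq \vl_j$, $\dt{i} \triangleq -\dt{j}$ and $\f_i \triangleq -\f_j$, which is exactly the claim. I expect no analytical obstacle: the entire content is the recognition that Theorem~\ref{thm:sol_add_constr_low_rank} is symmetric under exchanging the two neighboring active sets, so that a sign flip of the rank-one correction terms turns the ''add'' update into the ''remove'' update. The only point meriting a word of care is that the $j$-indexed quantities remain well defined when one begins from the larger set $\as{j}$; but since $\GA{i}$, $\Ga{\j}$, $\bA{i}$, $\ba{\j}$, $\SA{i}$ and $\Sa{\j}$ are all blocks or rows of the known data $\GA{j}$, $\bA{j}$ and $\SA{j}$, the matrices $\W$, $\w$ and the Schur complement $\C$ are all available, so $\cl_j$, $\vl_j$, $\dt{j}$ and $\f_j$ are computable from the $\as{j}$ data alone.
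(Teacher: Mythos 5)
Your proposal is correct and follows essentially the same route as the paper's own proof: both observe that $\as{i} = \as{j} \backslash \j \iff \as{j} = \as{i} \cup \j$, invoke Theorem~\ref{thm:sol_add_constr_low_rank}, and rearrange its two identities to isolate the $i$-indexed quantities, yielding the sign-flipped definitions $\dt{i} = -\dt{j}$ and $\f_i = -\f_j$. Your closing remark on the computability of the $j$-indexed quantities from the $\as{j}$ data alone is a sensible extra check but does not change the argument.
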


\begin{proof}
First, note that $\as{i} = \as{j} \backslash \j \iff \as{j} = \as{i} \cup \j$. Hence, Theorem~\ref{thm:sol_add_constr_low_rank} applies for the optimal active set $\as{j}$, and by re-order the terms in~\eqref{eq:dual_sol_mpQP_Aj_lowrank} it can be seen that
\begin{equation}
\qt{i} + \Qt{i}\p = \qt{j} + \Qt{j} \p + \dt{j} \parens{\cl_j + \vl_j^T \p},
\end{equation}
which, by defining the variables $\cl_i \triangleq \cl_j$, $\vl_i \triangleq \vl_j$ and $\dt{i} \triangleq - \dt{j}$ gives~\eqref{eq:dual_sol_rem_Ai_lowrank}. Furthermore, by re-order the terms in~\eqref{eq:primal_sol_Aj_lowrank} it follows that
\begin{equation}
\k{i} + \K{i} \p = \k{j} + \K{j} \p - \f_j \parens{\cl_j + \vl_j^T \p},
\end{equation}
which, using $\cl_i$, $\vl_i$ and defining $\f_i \triangleq - \f_j$, gives~\eqref{eq:primal_sol_rem_Ai_lowrank}.
\end{proof}
Note that, similar to Remark~\ref{rem:thm1_Ai_to_Aj_low_rank}, also here the parametric solution for $\CR{i}$ is a rank one modification of the one in~$\CR{j}$.



\begin{corollary}
\label{cor:def_CR_i_rem_constr_low_rank}
Let the parametric solution to~\eqref{eq:mpQP} with $\as{i} = \as{j} \backslash \j$ be given by Theorem~\ref{thm:sol_rem_constr_low_rank}. Then the corresponding critical region $\CR{i}$ is given by~\eqref{eq:CRi_def}, but where the primal and dual feasibility conditions are instead given by
\begin{subequations}
\label{eq:CR_i_rem_constr}
\begin{align}
\GN{i} \parens{\k{j} + \K{j} \p } + \ft{i,\inas{i}} \parens{\cl_i + \vl_i^T \p } & \preceq \bN{i} + \SN{i} \p,\label{eq:primal_feas_Ai_rem_tmp} \\
\qt{j,\as{i}} + \Qt{j,\as{i}}\p - \dt{i,\as{i}} \parens{\cl_i + \vl_i^T \p} &\succeq 0, \label{eq:dual_feas_Ai_rem_lowrank}
\end{align}
\end{subequations}
where $\ft{i} \triangleq \G \f_i \in \realnums{\nc}$ with $\ft{i,\as{i}} = 0$.
\end{corollary}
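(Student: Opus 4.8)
The plan is to mirror the proof of Corollary~\ref{cor:def_CR_j_add_constr_low_rank}, since removing the constraint $\j$ is exactly the inverse of adding it. I start from the definition~\eqref{eq:CRi_def} of the critical region with index ''$i$'', whose two blocks are the primal feasibility condition $\GN{i}\parens{\k{i} + \K{i}\p} \preceq \bN{i} + \SN{i}\p$ and the dual feasibility condition $\qt{i,\as{i}} + \Qt{i,\as{i}}\p \succeq 0$, and substitute into them the representations of the primal and dual solution furnished by Theorem~\ref{thm:sol_rem_constr_low_rank}.

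The dual condition~\eqref{eq:KKT_dual_feasibility} is immediate. Restricting the dual solution~\eqref{eq:dual_sol_rem_Ai_lowrank} to the index set $\as{i}$ gives $\qt{i,\as{i}} + \Qt{i,\as{i}}\p = \qt{j,\as{i}} + \Qt{j,\as{i}}\p - \dt{i,\as{i}}\parens{\cl_i + \vl_i^T\p}$, and inserting this into the dual feasibility condition yields~\eqref{eq:dual_feas_Ai_rem_lowrank} with no further work.

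For the primal condition~\eqref{eq:KKT_primal_feasibility} I would insert the primal solution~\eqref{eq:primal_sol_rem_Ai_lowrank}, namely $\k{i} + \K{i}\p = \k{j} + \K{j}\p + \f_i\parens{\cl_i + \vl_i^T\p}$, into the full inequality set $\G\z \preceq \bI + \S\p$ and define $\ft{i} \triangleq \G\f_i$, so that the rank-one correction contributes $\ft{i}\parens{\cl_i + \vl_i^T\p}$. Splitting the rows by active and inactive indices, the rows indexed by $\as{i}$ collapse to an identity: because $\as{i} \subset \as{j}$, restricting $\GA{j}\parens{\k{j} + \K{j}\p} = \bA{j} + \SA{j}\p$ to $\as{i}$ gives $\GA{i}\parens{\k{j} + \K{j}\p} = \bA{i} + \SA{i}\p$, so these constraints hold with equality and are redundant. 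What remains is precisely the block indexed by $\inas{i}$, which reproduces~\eqref{eq:primal_feas_Ai_rem_tmp} since $\GN{i}\f_i = \ft{i,\inas{i}}$.

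The only step with genuine content is the claim $\ft{i,\as{i}} = 0$, which is what makes the active block collapse cleanly and which I expect to be the main (though mild) obstacle. I would obtain it by symmetry: Theorem~\ref{thm:sol_rem_constr_low_rank} sets $\f_i \triangleq -\f_j$, so $\ft{i} = \G\f_i = -\G\f_j = -\ft{j}$, and Corollary~\ref{cor:def_CR_j_add_constr_low_rank} already establishes $\ft{j,\as{i}} = 0$ from the definitions of $\f_j$, $\dt{j}$, $\W$ and $\w$; hence $\ft{i,\as{i}} = -\ft{j,\as{i}} = 0$. The remaining care is purely bookkeeping --- keeping in mind that $\as{i}$ denotes the smaller active set shared by both theorems and that $\inas{i} = \inas{j} \cup \j$ --- after which the corollary follows.
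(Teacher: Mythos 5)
Your proof is correct and takes essentially the same route as the paper's: substitute the rank-one representations from Theorem~\ref{thm:sol_rem_constr_low_rank} into the primal and dual feasibility conditions of~\eqref{eq:CRi_def}, define $\ft{i} \triangleq \G \f_i$, and use $\ft{i,\as{i}} = 0$ to confine the correction to the inactive rows. Your explicit derivation of $\ft{i,\as{i}} = 0$ via $\ft{i} = -\ft{j}$ and Corollary~\ref{cor:def_CR_j_add_constr_low_rank} simply spells out what the paper compresses into ``follows by definition.''
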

\begin{proof}
The dual feasibility conditions~\eqref{eq:dual_feas_Ai_rem_lowrank} follow directly from~\eqref{eq:dual_sol_rem_Ai_lowrank}. Furthermore, inserting the primal parametric solution~\eqref{eq:primal_sol_rem_Ai_lowrank} into the inequality constraints of the \mpQP problem~\eqref{eq:mpQP} gives
\begin{equation}
\GN{i} \parens{\k{j} + \K{j} \p } + \GN{i} \f_i \parens{\cl_i + \vl_i^T \p }  \preceq \bN{i} + \SN{i} \p. \label{eq:thm:proof:primal_feas_Ai_rem_tmp}
\end{equation}
By using the definition $\ft{i} \triangleq \G \f_i $ ,~\eqref{eq:thm:proof:primal_feas_Ai_rem_tmp} gives~\eqref{eq:primal_feas_Ai_rem_tmp}. $\ft{i,\as{i}} = 0$ follows by definition, which concludes the proof.
\end{proof}
\begin{remark}
\label{rem:cor2_CRj_to_CRi_lowrank}
Similar to Remark~\ref{rem:cor1_CRi_to_CRj_low_rank}, the description of $\CR{j}$ is re-used in $\CR{i}$. Furthermore, $\Ga{\j} \parens{\k{j} + \K{j} \p} = \ba{\j} + \Sa{\j} \p$ since $\j \in \as{j}$. Hence, the new information in the description of $\CR{i}$ is contained in $\ft{i,\inas{i}} \in \realnums{|\inas{i}|}$ and $\dt{i,\as{i}} \in \realnums{|\as{i}|}$. 
\end{remark}

\section{Memory Efficient Storage Tree}
\label{sec:low_rank_storage}
In this section it will be shown how the theory presented in Section~\ref{sec:low_rank_expl} can be utilized repeatedly to store the parametric solutions and critical regions in a memory efficient manner. The storage of the parametric solution is arranged into a tree structure, henceforth denoted as the \emph{storage tree}. The tree structure is related to the tree in~\cite{gupta11:novel_mpQP} and the graph in~\cite{herceg13:eMPC_graph}.

The set $\Aset$ consists of all optimal active sets $\as{i}, \, i \in \intset{1}{\R}$ corresponding to the critical regions $\CR{i},\, i \in \intset{1}{\R}$ in the parametric solution of the \mpQP problem~\eqref{eq:mpQP}, and can be arranged in a tree structure by choosing the root node $\rr$ to correspond to $\as{\rr} \in \Aset$ with ${\rr \in \intset{1}{R}}$. 
To simplify the notation, let $\parent{i}$, $\chil{i}$ and $\anc{i}$ denote the parent, the set of children and the ordered set of indices of ancestors of node~$i$ in the tree, respectively. Furthermore, let $\desc{i}$ denote the descendants of node $i$, $\npath{i} = i \cup \parens{\anc{i} \backslash \rr}$ be the ordered set of the node and the ancestor nodes except the root, and let $\tDepth$ be the maximum depth in the tree.
\begin{definition}
\label{def:storage_tree}
The \emph{storage tree} of a set of optimal active sets $\Aset$ is denoted $\sTree{\Aset}{\rr}$, where node $\rr$ is the root node. 
\end{definition}
\begin{assumption}
\label{ass:storage_tree_one_add_rem_constr}
For all nodes $i \in \sTree{\Aset}{\rr} \backslash \rr$ only one constraint is added to, or removed from, the optimal active set of the parent node $\parent{i}$.
\end{assumption}

The nodes in a storage tree from Def.~\ref{def:storage_tree}, for which Ass.~\ref{ass:storage_tree_one_add_rem_constr} holds, correspond to the optimal active sets $\as{i} \in \Aset$, and they are arranged such that either one constraint is added or removed in the optimal active set from a parent to a child. This corresponds to moving across a facet between adjacent critical regions~\cite{tondel03:mpQP_and_eMPC}. As mentioned earlier, the case when $k$ constraints are added or removed can be derived analogously.

\begin{remark}
The results in Section~\ref{sec:low_rank_expl} are not dependent on the tree structure and hold also for more general graph structures than the one chosen here.
\end{remark}

An example with two parameters and a partitioning consisting of $6$ critical regions $\CR{i}$ for $i \in \intset{1}{6}$ is seen in Figure~\ref{fig:cr_2d}, and a corresponding storage tree $\sTree{\Aset}{2}$ is seen in Figure~\ref{fig:storage_tree}. Each critical region $\CR{i}$ corresponds to the optimal active set $\as{i} \in \Aset$, where $\Aset = \{\{\},\{1\},\{2\},\{3\},\{1,2\},\{1,3\} \}$. Hence, $\CR{2}$ is the critical region for the optimal active set $\as{2} = \{1\}$ etc. In Figure~\ref{fig:cr_2d}, the hyperplane between, \eg, $\CR{2}$ and $\CR{6}$ corresponds to constraint $3$. Hence, moving from region $\CR{2}$ to $\CR{6}$ by stepping over the shared facet corresponds to adding constraint~$3$ to the optimal active set in $\CR{2}$, \ie, $\as{6} = \as{2} \cup \{3\}$. In the tree in Figure~\ref{fig:storage_tree} the optimal active set $\as{2} = \{1\}$ is chosen as root node, and for this tree it can be seen that, for example, $\parent{3} = 1$, $\anc{3} = \{1,2\}$, $\npath{3} = \{3,1\}$ and $\chil{2} = \{1,5,6\}$. The maximum depth is $\tDepth = 2$. The transition from $\CR{2}$ to $\CR{6}$ corresponds to moving from node $2$ to node $6$ in the tree by adding constraint $3$.
 \begin{figure}
 \centering
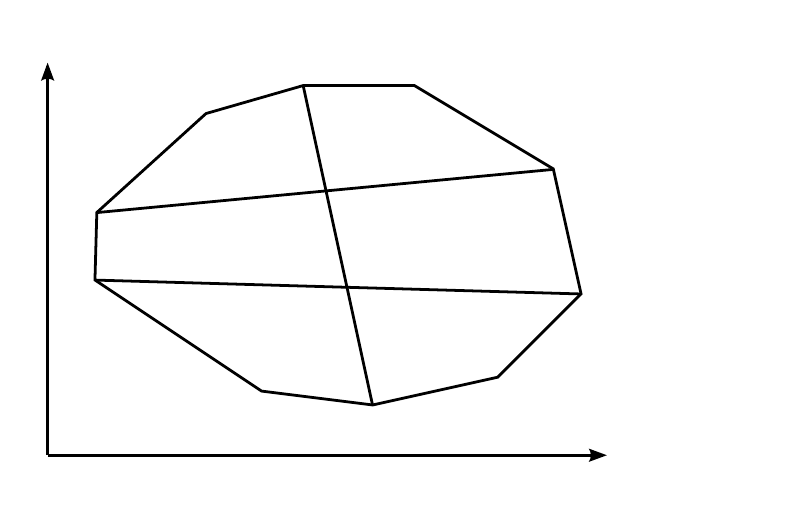
\caption{Example with critical regions $\CR{i}$ for $i \in \intset{1}{6}$ in two dimensions. The three separating hyperplanes correspond to constraints $1$, $2$ and $3$, respectively. When moving between adjacent critical regions, the constraint corresponding to the facet is either added or removed from the optimal active set. An example of a storage tree for this partition is seen in Figure~\ref{fig:storage_tree}.}
\label{fig:cr_2d}
\end{figure}
\begin{figure}
\centering
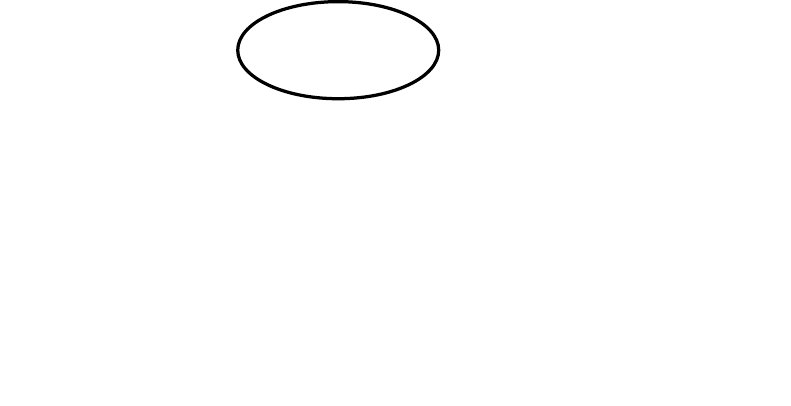
\caption{Example of a storage tree for the partitioning in Figure~\ref{fig:cr_2d}, where each $\CR{i}$ for $i \in \intset{1}{6}$ corresponds to an optimal active set in $\Aset = \{\{\},\{1\},\{2\},\{3\},\{1,2\},\{1,3\} \}$. The number in front of the '':'' in each node corresponds to the index $i$ in $\Aset$. A ''+'' sign at an edge corresponds to adding a constraint to the child, and a ''-'' sign corresponds to removing a constraint. Note that the tree structure is not unique.}
\label{fig:storage_tree}
\end{figure}

Note that the storage tree is not unique. Here, \eg, the tree $\sTree{\Aset}{1}$ could also be used, \ie, having the optimal active set $\{\}$ in the root instead. The choice of tree structure will affect the maximum depth of the tree, and hence also the on-line performance. How to choose the tree to obtain maximum performance is outside the scope of this work.

\begin{remark}
The storage tree could either be constructed after all optimal active sets have been determined, or while building the solution to the \mpQP problem in the solver.
\end{remark}

\subsection{Compute the parametric solution and critical region}
\label{subsec:compute_sol_and_cr_in_storage_tree}

By storing the full description of the critical region $\CR{\rr}$ and the parametric solution $\k{\rr}$ and $\K{\rr}$ in the root node and repeating the theory in Section~\ref{sec:low_rank_expl}, it can be shown that the parametric solution and description of the critical region in each node $i$ can be described by a number of low rank modifications of the root node. From the definition of $\sTree{\Aset}{\rr}$ it can be seen that the modifications which are used to obtain the solution and the critical region in node $i$ are stored in the nodes $j \in \npath{i}$, \ie, the nodes along the path from the root node to node $i$. 

\begin{theorem}
\label{thm:compute_node_low_rank}
Let $\Aset$ be the set of optimal active sets for the \mpQP problem~\eqref{eq:mpQP}, and let $\sTree{\Aset}{\rr}$ be a storage tree for which Ass.~\ref{ass:storage_tree_one_add_rem_constr} holds. 
Then the parametric solution for $\p \in \CR{i}$ for $i \in \sTree{\Aset}{\rr}$ is given by
\begin{equation}
\zopt(\p) = \k{\rr} + \K{\rr}\p + \sum_{j \in \npath{i}} \f_j \parens{\cl_j + \vl_j^T \p}, \label{eq:thm:compute_node_lr_primal_sol}
\end{equation}
where $\cl_j$, $\vl_j$ and $\f_j$ are defined as in Theorem~\ref{thm:sol_add_constr_low_rank} or Theorem~\ref{thm:sol_rem_constr_low_rank} depending on the type of edge between node $i$ and $\parent{i}$.
Furthermore, the critical region $\CR{i}$ is described by the set of parameters $\p \in \pSet$ that satisfy the inequalities
\begin{subequations}
\label{eq:thm:compute_node_lr_feas}
\begin{align}
\btp{\inas{i}} + \Atp{\inas{i}} \p + \sum_{j \in \npath{i}}   \ft{j,\inas{i}} \parens{\cl_j + \vl_j^T \p } &\preceq 0,\label{eq:thm:compute_node_lr_primal_feas} \\
-\qt{\rr,\as{i}} - \Qt{\rr,{\as{i}}} \p  + \sum_{j \in \npath{i}} \dt{j,\as{i}} \parens{\cl_j + \vl_j^T \p } &\preceq 0, \label{eq:thm:compute_node_lr_dual_feas}
\end{align}
\end{subequations}
where $\btp{} \triangleq \G \k{\rr} - \bI$ and $\Atp{} \triangleq \G \K{\rr}- \S$.
\end{theorem}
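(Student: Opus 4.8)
The plan is to establish both claims simultaneously by induction on the depth of node $i$ in $\sTree{\Aset}{\rr}$, advancing one edge at a time with Theorem~\ref{thm:sol_add_constr_low_rank} or Theorem~\ref{thm:sol_rem_constr_low_rank}. The structural fact that drives the induction is $\npath{i} = \braces{i} \cup \npath{\parent{i}}$: extending the path from the parent to the child appends exactly one node, hence exactly one rank one term, to each accumulated sum. The base case is $i = \rr$, where $\npath{i}$ contributes nothing, so the solution is the stored root solution $\k{\rr} + \K{\rr}\p$ and~\eqref{eq:thm:compute_node_lr_feas} collapses to $\btp{\inas{\rr}} + \Atp{\inas{\rr}}\p \preceq 0$ and $-\qt{\rr,\as{\rr}} - \Qt{\rr,\as{\rr}}\p \preceq 0$, which are precisely the primal and dual feasibility conditions~\eqref{eq:CRi_def} for the root active set $\as{\rr}$.

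For the inductive step I assume the representation holds for $\parent{i}$. By Assumption~\ref{ass:storage_tree_one_add_rem_constr} the edge from $\parent{i}$ to $i$ either adds one constraint, so that Theorem~\ref{thm:sol_add_constr_low_rank} applies, or removes one, so that Theorem~\ref{thm:sol_rem_constr_low_rank} applies. In both cases the two theorems give $\k{i} + \K{i}\p = \k{\parent{i}} + \K{\parent{i}}\p + \f_i\parens{\cl_i + \vl_i^T\p}$ and $\qt{i} + \Qt{i}\p = \qt{\parent{i}} + \Qt{\parent{i}}\p - \dt{i}\parens{\cl_i + \vl_i^T\p}$; that is, the child's primal and dual solutions equal the parent's plus a single edge term, the removal case being obtained from the sign reversal built into Theorem~\ref{thm:sol_rem_constr_low_rank}. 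Substituting the inductive hypothesis for $\parent{i}$ and using $\npath{i} = \braces{i} \cup \npath{\parent{i}}$ then yields~\eqref{eq:thm:compute_node_lr_primal_sol} together with the companion dual expression $\lamopt{\lam}(\p) = \qt{\rr} + \Qt{\rr}\p - \sum_{j \in \npath{i}} \dt{j}\parens{\cl_j + \vl_j^T\p}$.

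With these closed forms the critical region description follows by the same substitution used in Corollary~\ref{cor:def_CR_j_add_constr_low_rank}, now carried out relative to the root rather than the parent. For primal feasibility I insert~\eqref{eq:thm:compute_node_lr_primal_sol} into $\G\parens{\k{i} + \K{i}\p} \preceq \bI + \S\p$, use $\ft{j} \triangleq \G\f_j$ together with the definitions $\btp{} \triangleq \G\k{\rr} - \bI$ and $\Atp{} \triangleq \G\K{\rr} - \S$, and then retain only the rows indexed by $\inas{i}$; the rows indexed by $\as{i}$ hold with equality because $\k{i} + \K{i}\p$ solves the equality constrained problem for $\as{i}$, so they vanish and~\eqref{eq:thm:compute_node_lr_primal_feas} remains. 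For dual feasibility I extract the rows indexed by $\as{i}$ from the accumulated dual solution, impose $\succeq 0$, and negate to obtain~\eqref{eq:thm:compute_node_lr_dual_feas}.

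I expect the main obstacle to be bookkeeping rather than any analytic estimate. The quantities $\f_j$, $\dt{j}$ and $\ft{j}$ are introduced in Theorems~\ref{thm:sol_add_constr_low_rank}--\ref{thm:sol_rem_constr_low_rank} and Corollary~\ref{cor:def_CR_j_add_constr_low_rank} relative to the \emph{local} active set at the corresponding edge, whereas~\eqref{eq:thm:compute_node_lr_feas} extracts their components on the \emph{global} index sets $\as{i}$ and $\inas{i}$ of the terminal node. I would therefore be careful to reinterpret each as a full length object in $\realnums{\nc}$, relying on the zero padding conventions of the tilde notation to make the global extractions well defined, and to verify that the alternating sign conventions of Theorem~\ref{thm:sol_rem_constr_low_rank} accumulate correctly along a path that mixes added and removed constraints.
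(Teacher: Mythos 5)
Your proposal is correct and follows essentially the same route as the paper: an induction along the tree from the root, applying Theorem~\ref{thm:sol_add_constr_low_rank} or Theorem~\ref{thm:sol_rem_constr_low_rank} across each edge via $\npath{i} = i \cup \npath{\parent{i}}$, with the region description obtained analogously from the corollaries after identifying the root's feasibility conditions with $\btp{} + \Atp{}\p \preceq 0$ and $-\qt{\rr} - \Qt{\rr}\p \preceq 0$. Your added care about reinterpreting $\f_j$, $\dt{j}$, $\ft{j}$ as zero-padded vectors in $\realnums{\nc}$ before extracting the global index sets $\as{i}$, $\inas{i}$ is a legitimate bookkeeping point that the paper leaves implicit, but it does not change the argument.
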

%
\begin{proof}
Assume that~\eqref{eq:thm:compute_node_lr_primal_sol} holds for some $n \in \sTree{\Aset}{\rr}$ such that $\chil{n} \neq \emptyset$. Take an arbitrary node $i \in \chil{n}$ where $\as{i} = \as{n} \cup \j$ (or $\as{i} = \as{n} \backslash \j$). Then it follows from Theorem~\ref{thm:sol_add_constr_low_rank} (or Theorem~\ref{thm:sol_rem_constr_low_rank}) that the parametric solution is
\begin{align}
&\zopt(\p) = \k{\rr} + \K{\rr}\p + \sum_{j \in \npath{n}} \f_j \parens{\cl_j + \vl_j^T \p} \notag +\\& \f_i \parens{\cl_i + \vl_i^T \p} = 
\k{\rr} + \K{\rr}\p + \sum_{j \in \npath{i}} \f_j \parens{\cl_j + \vl_j^T \p},
\end{align}
where it has been used that $\npath{i} = i \cup \npath{n}$ since $i \in \chil{n}$. Since~\eqref{eq:thm:compute_node_lr_primal_sol} holds for the root $\rr$, and $n$ and $i\in \chil{n}$ were chosen arbitrary, the relation~\eqref{eq:thm:compute_node_lr_primal_sol} follows from induction.

The relations~\eqref{eq:thm:compute_node_lr_feas} can be shown analogously from Corollary~\ref{cor:def_CR_j_add_constr_low_rank} (or Corollary~\ref{cor:def_CR_i_rem_constr_low_rank}) by utilizing that, for the root node $\rr$, the dual feasibility conditions~\eqref{eq:KKT_dual_feasibility} are $-\qt{\rr} - \Qt{\rr} \p  \preceq 0$ and the primal feasibility conditions~\eqref{eq:KKT_primal_feasibility} are described by
\begin{align}
\G \parens{\k{\rr} + \K{\rr} \p} \preceq \bI + \S\p \iff  \btp{} + \Atp{} \p &\preceq 0,
\end{align}
where $\btp{} \triangleq \G \k{\rr} - \bI$ and $\Atp{} \triangleq \G \K{\rr}- \S$.
\end{proof}

\begin{remark}
\label{rem:only_store_full_root}
The full parametric solution and description of the critical region is only stored for the root node $\rr$. For the rest of the nodes, only the low rank modifications are stored. 
\end{remark}

The method to compute the solution as in Theorem~\ref{thm:compute_node_low_rank} is implemented in Alg.~\ref{alg:eval_sol}, where the parameter $\p$ and the node $i$ are inputs, and the optimal solution $\zopt(\p)$ is returned.
\begin{algorithm}[htbp!]
  \caption{On-line evaluation of parametric solution}
  \label{alg:eval_sol}
  \begin{algorithmic}[1]
  \INPUT $\p$ and $i$.
  \STATE Initialize $z = \k{\rr} + \K{\rr}\p.$
    \FOR{$j \in \npath{i}$}
	\STATE $z:=z+\f_j \parens{\cl_j + \vl_j^T \p}$.
    \ENDFOR
    \STATE $\zopt := z$.
  \end{algorithmic}
\end{algorithm}

\begin{corollary}
\label{cor:compute_node_CR_low_rank_eHyp}
To obtain the minimal representation of the critical region $\CR{i}$, only the describing hyperplanes $\ePrimal{i}$ and $\eDual{i}$ need to be used, \ie, $\CR{i}$ can be described by~\eqref{eq:CRi_def_minimal} but where the primal and dual feasibility conditions are given by
\begin{subequations}
\label{eq:cor_compute_node_CR_lr_ineqs}
\begin{align}
\btp{\ePrimal{i}} + \Atp{\ePrimal{i}} \p + \sum_{j \in \npath{i}}  \ft{j,\ePrimal{i}} \parens{\cl_j + \vl_j^T \p } &\preceq 0, \\
-\qt{\rr,\eDual{i}} - \Qt{\rr,\eDual{i}} \p  + \sum_{j \in \npath{i}}   \dt{j,\eDual{i}} \parens{\cl_j + \vl_j^T \p } &\preceq 0.
\end{align}
\end{subequations}
\end{corollary}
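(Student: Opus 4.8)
The plan is to reduce the corollary to the redundancy elimination already carried out in Section~\ref{sec:mpQP}, by observing that the low rank description supplied by Theorem~\ref{thm:compute_node_low_rank} is nothing but a row-wise re-expression of the original half-space description~\eqref{eq:CRi_def}. First I would recall that Theorem~\ref{thm:compute_node_low_rank} furnishes the \emph{full} description of $\CR{i}$: the inequalities~\eqref{eq:thm:compute_node_lr_feas} are indexed by the inactive set $\inas{i}$ (primal part) and the active set $\as{i}$ (dual part), and together they cut out exactly the polyhedron~\eqref{eq:CRi_def}. This is immediate from the proof of Theorem~\ref{thm:compute_node_low_rank}, where each line of~\eqref{eq:thm:compute_node_lr_feas} was obtained by rewriting the corresponding primal or dual feasibility condition of~\eqref{eq:CRi_def} as a telescoped sum of low rank modifications of the root node.

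The key step is to note that this correspondence holds row by row: for each $k \in \inas{i}$ the $k$-th primal inequality in~\eqref{eq:thm:compute_node_lr_primal_feas} and the $k$-th primal inequality in~\eqref{eq:CRi_def} define the \emph{same} hyperplane, and likewise for each $k \in \as{i}$ on the dual side. Concretely, substituting the telescoped primal solution~\eqref{eq:thm:compute_node_lr_primal_sol} into $\GN{i}\parens{\k{i} + \K{i}\p} - \bN{i} - \SN{i}\p \preceq 0$ and using $\btp{} \triangleq \G \k{\rr} - \bI$, $\Atp{} \triangleq \G\K{\rr} - \S$ and $\ft{j} \triangleq \G\f_j$ reproduces~\eqref{eq:thm:compute_node_lr_primal_feas} verbatim, and the analogous telescoping of the dual solution reproduces~\eqref{eq:thm:compute_node_lr_dual_feas}. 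Consequently the notion of a redundant describing hyperplane is identical in the two formulations: whether a given half-space is implied by the others depends only on the polyhedron $\CR{i}$ and on the half-space itself, not on the algebraic form in which it is written. In particular the irredundant subsets $\ePrimal{i} \subseteq \inas{i}$ and $\eDual{i} \subseteq \as{i}$ identified for~\eqref{eq:CRi_def} when passing to the minimal representation~\eqref{eq:CRi_def_minimal} are exactly the irredundant subsets for~\eqref{eq:thm:compute_node_lr_feas} as well.

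It then remains only to select these rows. Restricting the primal inequalities~\eqref{eq:thm:compute_node_lr_primal_feas} to the indices $\ePrimal{i}$ and the dual inequalities~\eqref{eq:thm:compute_node_lr_dual_feas} to the indices $\eDual{i}$ produces precisely~\eqref{eq:cor_compute_node_CR_lr_ineqs}, and by the previous paragraph the resulting system still describes $\CR{i}$ with no redundant hyperplanes, i.e.\ it is the minimal representation~\eqref{eq:CRi_def_minimal}. I expect the only point requiring care to be the explicit verification of the row-by-row hyperplane correspondence between~\eqref{eq:thm:compute_node_lr_feas} and~\eqref{eq:CRi_def}; once that identity is in hand the selection of $\ePrimal{i}$ and $\eDual{i}$ is purely formal, so no genuine obstacle arises beyond the bookkeeping of indices.
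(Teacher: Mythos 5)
Your proposal is correct and follows essentially the same route as the paper: the paper's own proof is a one-line observation that each hyperplane $n$ in~\eqref{eq:thm:compute_node_lr_feas} depends only on the components and rows indexed by $n$, so the irredundant rows can be selected independently; your argument spells out the same row-by-row correspondence with~\eqref{eq:CRi_def} and the representation-independence of redundancy in more detail. No gap.
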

\begin{proof}
The corollary follows directly from Theorem~\ref{thm:compute_node_low_rank} since each hyperplane $n$ in~\eqref{eq:thm:compute_node_lr_feas} is described only by the components and rows indexed by $n$ in~\eqref{eq:thm:compute_node_lr_feas}.
\end{proof}

In Alg.~\ref{alg:eval_hyper} the evaluation of a hyperplane using Corollary~\ref{cor:compute_node_CR_low_rank_eHyp} is implemented. The parameter $\p$, the node $i$ and the hyperplane index $n$ are given as inputs to the algorithm, and the value of the hyperplane, denoted $s$, is returned.
\begin{algorithm}[htbp!]
  \caption{On-line evaluation of a hyperplane}
  \label{alg:eval_hyper}
  \begin{algorithmic}[1]
  \INPUT $\p$, $i$ and $n$.
    \IF{$n \in \ePrimal{i}$}
    \STATE Initialize $s := \btp{n} + \Atp{n}\p.$
    \FOR{$j \in \npath{i}$}
	\STATE $s:=s+\ft{j,n} \parens{\cl_j + \vl_j^T \p}$.
	\ENDFOR
	\ELSE 
	\STATE Initialize $s:=-\qt{\rr,n} - \Qt{\rr,n}\p$
	\FOR{$j \in \npath{i}$}
	\STATE $s:=s+\dt{j,n} \parens{\cl_j + \vl_j^T \p}$.
	\ENDFOR
	\ENDIF
  \end{algorithmic}
\end{algorithm}
Note that for a given parameter $\p$ the value of $\cl_j + \vl^T_j \p$ only has to be computed once for each $\CR{j}$ when evaluating hyperplanes, and $\btp{n} + \Atp{n}\p$ and $-\qt{\rr,n} - \Qt{\rr,n}\p$ need only be computed once for each parameter and can be re-used by all nodes.

\begin{remark}
In algs.~\ref{alg:eval_sol} and~\ref{alg:eval_hyper} the order of summation can be chosen to facilitate on-line performance by taking, \eg, memory access into consideration.
\end{remark}

\subsection{Storing the parametric solution and critical region}
\label{subsec:reducing_memory_storage_tree}
From Theorem~\ref{thm:compute_node_low_rank} it follows that computing the parametric solution in node $i$ only requires the storage of $\cl_j \in \realnums{}$, $\vl_j \in \realnums{\np}$ and $\f_j \in \realnums{\nz}$ for each node $j \in \npath{i}$. However, in the root node $\rr$ the full parametric solution defined by $\k{\rr} \in \realnums{\nz}$ and $\K{\rr} \in \realnums{\nz \times \np}$ needs to be stored. 
In Corollary~\ref{cor:compute_node_CR_low_rank_eHyp} it is shown that the description of the critical region in node $i$ only requires the describing hyperplanes. Hence, the full vectors and matrices in~\eqref{eq:cor_compute_node_CR_lr_ineqs} need not be stored, but only the components and rows of $\btp{}$, $\qt{\rr}$, $\Atp{}$, $\Qt{\rr}$, $\ft{j}$ and $\dt{j}$ for $j \in \npath{i}$ that correspond to the describing hyperplanes. 

Furthermore, from Corollary~\ref{cor:compute_node_CR_low_rank_eHyp} it can be seen that the low rank modification in node~$i$ is also used by all descendants $\desc{i} \subset \sTree{\Aset}{\rr}$. Hence, the entries and rows with indices corresponding to defining hyperplanes for $\CR{j}$ with $j \in \desc{i}$ also need to be stored in node~$i$.
Let $\sPrimal{i}$ and $\sDual{i}$ be the indices of the hyperplanes corresponding to primal and dual feasibility conditions in the description of $\CR{i}$ that need to be stored in node~$i$. Then these sets of indices are 
\begin{align}
\sPrimal{i} &\triangleq \bigcup_{j \in \desc{i} \cup i } \ePrimal{j} , \label{eq:def_sPrimal} \quad
\sDual{i} \triangleq \bigcup_{j \in \desc{i} \cup i } \eDual{j}. 
\end{align}
Here, by definition, the trivial zeros in $\ft{i}$ and $\dt{i}$ should not be stored when computing $\sPrimal{i}$ and $\sDual{i}$ in~\eqref{eq:def_sPrimal}.
Hence, $\cl_i \in \realnums{}$, $\vl_i \in \realnums{\np}$, $\f_i \in \realnums{\nz}$, $\ft{i,\sPrimal{i}} \in \realnums{|\sPrimal{i}|}$ and $\dt{i,\sDual{i}} \in \realnums{|\sDual{i}|}$ need to be stored for each node $i\in \sTree{\Aset}{\rr} \backslash \rr $. Storing these for node~$i$ requires $1 + \np + \nz + |\sPrimal{i}| + |\sDual{i}|$ real numbers. For the root node $\rr$, the vectors $\k{\rr} \in \realnums{\nz}$, $\btp{\sPrimal{\rr}} \in \realnums{|\sPrimal{\rr}|}$, $\qt{\rr,\sDual{\rr}} \in \realnums{|\sDual{\rr}|}$ and the matrices $\K{\rr} \in \realnums{\nz \times \np}$, $\Atp{\sPrimal{\rr}} \in \realnums{|\sPrimal{\rr}| \times \np}$ and $\Qt{\rr,\sDual{\rr}} \in \realnums{|\sDual{\rr}| \times \np}$ need to be stored. This requires $m\parens{\np+1} +\parens{|\sPrimal{\rr}|+|\sDual{\rr}| } \parens{\np + 1}$ real numbers to be stored. Hence, the total number $\mem{LR}$ of stored real numbers for $\sTree{\Aset}{\rr}$ is
\begin{align}
\mem{LR} &= \nz \parens{\np +R } + \memCR{LR}, \; \; \, \memCR{LR} \triangleq \parens{ |\sPrimal{\rr}| + |\sDual{\rr}|} \parens{\np +1 } + \notag\\& \parens{R-1}\parens{1 + \np } +\sum_{i \in \sTree{\Aset}{\rr} \backslash \rr} \parens{|\sPrimal{i}| + |\sDual{i}|}. \label{eq:mem_requirement_LR}
\end{align}
Note that the storage of $\cl_i $ and $\vl_i $ is included in $\memCR{LR}$.

\begin{remark}
Note that the set of optimal active sets $\Aset$ can be described by several smaller trees, where the union of the node in the trees corresponds to all optimal active sets $\as{i} \in \Aset$ for $i \in \intset{1}{R}$. In the example presented in Figure~\ref{fig:cr_2d} and~Figure~\ref{fig:storage_tree}, the partitioning can for example be described by two trees; one containing, \eg, the nodes $1,2,3,5$ and one containing the nodes $4,6$. In this case, there are two root nodes (one for each tree) where the full solution and description of the critical region need to be stored, \ie, several trees require more memory to store the parametric solution and critical regions. Using several trees instead of one can, if chosen correctly, reduce the maximum depth of each tree and hence affect the on-line performance. The balancing between the choice of number of trees and the structure of each tree to reduce the memory requirements and the on-line performance to evaluate a solution or a hyperplane is probably problem dependent, and the choice can be made by optimizing the memory reduction and on-line complexity to fit the requirements of a particular hardware set-up and problem. How to do this is not investigated in this work.
\end{remark}

\section{Explicit Model Predictive Control}
\label{sec:explicit_MPC}

In linear~\MPC the input is computed by solving a \cftoc problem. A common formulation of the \cftoc problem is
\begin{equation}
\minimize{\frac{1}{2} \sum_{t=0}^{N-1} \parens{x_t^T \QMPC x_t + u_t^T \RMPC u_t} + \frac{1}{2} x_N^T \PN x_N}{\timestack{x},\timestack{u}}{&x_0 = \xinit \\ &x_{t+1} = \AMPC x_t + \BMPC u_t, \; t \in \intset{1}{N-1} \\ &\HxMPC x_t + \HuMPC u_t + \hMPC \preceq 0, \; t \in \intset{1}{N-1} \\ &\HxMPC x_N + \hMPC \preceq 0,} \label{eq:cftoc}
\end{equation}
where $x_t \in \realnums{\nx}$ is the state vector, $u_t \in \realnums{\nuu}$ is the control input, $\xinit$ is the initial state and $N$ is the prediction horizon. The cost function is given by $\QMPC \in \possemidefmats^{\nx}$, $\RMPC \in \posdefmats^{\nuu}$ and $\PN \in \possemidefmats^{\nx}$. The equality constraints are the dynamics equations of the controlled system, and the inequality constraints are constraints on the states and control inputs.


Similarly to what is shown in~\cite{Bemporad02:explicit_mpc}, the \cftoc problem~\eqref{eq:cftoc} can equivalently be written in the form of an \mpQP problem~\eqref{eq:mpQP_with_linear_cost_term} by defining the matrices
\begin{subequations}
\begin{align}
\HH &\triangleq \timestack{Q_u} + \timestack{B}^T \timestack{Q_x} \timestack{B} \in \posdefmats^{N\nuu}, \; \g \triangleq \timestack{A}^T \timestack{Q_x}\timestack{B}, \\ \G &\triangleq \timestack{H_x} \timestack{B} + \timestack{H_u}, \; \bI \triangleq - \timestack{h}, \; \E \triangleq - \timestack{H_x} \timestack{A},\; \p \triangleq \xinit,
\end{align}
\end{subequations}
where $\timestack{Q_x}$, $\timestack{Q_u}$, $\timestack{A}$, $\timestack{B}$, $\timestack{H_x}$, $\timestack{H_u}$ and $\timestack{h}$ are all defined in~\eqref{eq:app:MPC_mtrx_def} in Appendix~\ref{app:lin_alg}.
%
%
By re-writing the \mpQP problem~\eqref{eq:mpQP_with_linear_cost_term} into~\eqref{eq:mpQP} and solving it parametrically, the optimal solution to the \cftoc problem is given by $\timestack{u}^*(\p) = \zopt(\p) - \inv{H} \g^T \p$ where $\zopt(\p)$ is the \PWA function~\eqref{eq:PWA_solution},~\cite{Bemporad02:explicit_mpc}. 

Since the \cftoc problem is equivalent to an \mpQP problem in the form~\eqref{eq:mpQP}, the storage of the explicit solution to the \cftoc problem can be done using the theory presented in this paper. Only the first control input $u_0^*$ is used as input to the plant in the \MPC control loop~\cite{maciejowski2002predictive}, and hence the full parametric solution does not need to be stored in the case of explicit \MPC. Here only the first $\nuu$ components and rows of $\k{i}$ and $\K{i}$, respectively, are stored. For the traditional non-compressed solution this results in that
\begin{equation}
\memMPC{F} = R\nuu\parens{\np+1} + \sum_{i=1}^R \parens{\ePrimal{i} + \eDual{i}} \parens{\np +1 }, \label{eq:mem_requirement_mpc_full}
\end{equation}
real numbers are stored. This is a slightly modified version of $\mem{F}$ in~\eqref{eq:mem_requirement_full}. Similarly for the compressed solution, only the low rank modifications affecting the first $\nuu$ rows need to be stored in $\sTree{\Aset}{\rr}$. Hence, for the explicit \MPC solution, the number of stored real numbers is
\begin{equation}
\memMPC{LR} = \nuu \parens{\np +R } + \memCR{LR}, \label{eq:mem_requirement_mpc_LR}
\end{equation}
which is a slightly modified version of $\mem{LR}$ in~\eqref{eq:mem_requirement_LR}.

\section{Experimental Evaluation}
\label{sec:experimental_results}
In this section, the memory requirement for storing the solution and the critical regions for the proposed method is compared to storing the full solution and critical regions. The \mpQP problems have been solved using \MPT (version ''3.1.2 (R2011a) 28.10.2015'') in \textsc{Matlab} (version {''8.4.0.150421 (R2014b)''}). Although the optimal active sets are computed in the \mpQP solver, it was not possible to access them in the solution returned by the solver. Hence, they have instead been retrieved by solving the corresponding \QP problem with the Chebychev center of each critical region as parameter.

\subsection{Defining the problems}
\label{subsec:experimental_problems}
 The comparison has been made for three different examples where explicit \MPC controllers are applied to stable \lti systems. For the first system, referred to as \emph{Problem~1}, the continuous system $1/(s+1)^{\nx}$
which is used in~\cite{kvasnica15:region_free} is studied, and the system has $\nx$ states and $\nuu = 1$ control inputs. The transfer function has been discretized using a unit sampling time, the weight matrices are $\QMPC = I$ and $\RMPC = I$, and the terminal cost $\PN$ is chosen as the discrete time \LQ cost. The states and control inputs are subject to the constraints
\begin{equation}
-10 \preceq x_t \preceq 10, \, t \!\in\! \intset{0}{N}, \; -1 \leq u_t \leq 1, \, t \!\in \!\intset{0}{N-1}.
\end{equation}
After re-writing this explicit \MPC problem into an equivalent \mpQP problem as in Section~\ref{sec:explicit_MPC}, the problem has $\np=\nx$ parameters and $\nz = N\nuu = N$ variables.

The second and third problems both use a system which is similar to the one used in, \eg,~\cite{wang10:fast_MPC,axehill15:_contr_mpc}. It consists of $n_M$ unit masses which are coupled with springs and dampers. The spring constant is chosen as $1$, the damping constant as~$0$, the weight matrices to $\QMPC = 100 I$ and $\RMPC = I$ and the terminal cost $\PN$ is chosen as the discrete time \LQ cost. The continuous system is discretized using the sampling time $0.5$ seconds. Two different cases have been studied, referred to as \emph{Problem~2} and \emph{Problem~3}. In Problem~2 the control input is a force acting between terra firma and the first mass, and in Problem~3 there is also an extra control input acting as a force applied between the first two masses. In both problems, the states and control inputs are subject to the constraints
\begin{equation}
-4\! \preceq x_t \preceq \!4, \, t\! \in \! \intset{0}{N}, \; -0.5 \preceq u_t \preceq 0.5, \, t \! \in \! \intset{0}{N-1}. 
\end{equation}
Each mass introduces $2$ states, and in Problem~2 $\nuu = 1$ and in Problem~3 $\nuu = 2$ by construction. Hence, by re-writing the \MPC problem into the equivalent \mpQP problem as in Section~\ref{sec:explicit_MPC}, the corresponding \mpQP problem has $\np = 2 n_M$ parameters and $\nz = N\nuu$ variables.

\vspace{-8pt}
\subsection{Experimental results}
\label{subsec:experimental_results}

The relative memory reduction has been computed for the three problems for different parameter dimensions and prediction horizons, and the results are summarized in tables~\ref{tab:ex1}-\ref{tab:masses_nu2}. The storage tree $\sTree{\Aset}{\rr}$ is chosen such that the the root node corresponds to the optimal active set $\as{\rr} = \{\}$, \ie, the unconstrained minimum, and whenever it is possible only one constraint is added to the children of each node. When the \mpQP problem is defined, all redundant constraints are removed, giving $\nc$ number of non-redundant constraints. Furthermore, $\R$ is the number of regions, $\tDepth$ is the maximum depth of $\sTree{\Aset}{\rr}$, and
\begin{equation}
\ratCR{LR}{F} \triangleq \frac{\memCR{LR}}{\memCR{F}}, \; \rat{LR}{F} \triangleq \frac{\mem{LR}}{\mem{F}}, \; \ratMPC{LR}{F} \triangleq \frac{\memMPC{LR}}{\memMPC{F}},
\end{equation}
are the relative reductions in the number of stored real numbers for only the critical regions, the full solution and the critical regions, and for storing the first $\nuu$ control inputs and the critical regions, respectively. Hence, for the case of explicit \MPC, $\ratMPC{LR}{F}$ determines the total relative memory reduction, whereas for a full \mpQP problem it is given by $\rat{LR}{F}$. No symmetry or other properties which are inherited from the explicit \MPC problems are exploited in the comparison. In some cases, for large problems with many parameters and variables the solution in a few regions are numerically bad. This might be a consequence of difficulties with finding the correct optimal active set given the critical region.

In Table~\ref{tab:ex1} the result for Problem~$1$ is seen, and it is clear that the relative memory reduction becomes increasingly beneficial with the parameter dimension. For $N=2$ and $N=3$, the memory is reduced by approximately an order of magnitude for the parameter dimensions $\np = 12$ and $\np=14$. The result for Problem~$2$ is presented in Table~\ref{tab:masses_nu1}, and for this problem the relative memory reduction is an order of magnitude for problems with $n_M \geq 6$. Table~\ref{tab:masses_nu2} contains the numerical results for Problem~$3$, and it can be seen that also for this problem the relative memory reduction is increased for larger parameter dimensions. Note that for all evaluated problems the memory required for the storage tree is lower than for storing the full solution.

\begin{table}
\caption{Experimental results for Problem 1.}
\centering
\label{tab:ex1}
\begin{tabular}{c||c|c|c||c|c|c}
$n/N$ & $\nc$ & $\R$ & $\tDepth$ &  $\ratCR{LR}{F}$ & $\rat{LR}{F}$ & $\ratMPC{LR}{F}$  \\
\hline
\hline
$2/2$ & 10 & 5 & 2 & 0.909 & 0.729 & 0.827 \\ 
$4/2$ & 20 & 11 & 2 & 0.446 & 0.403 & 0.431 \\ 
$6/2$ & 28 & 45 & 2 & 0.258 & 0.239 & 0.252 \\ 
$8/2$ & 44 & 153 & 2 & 0.176 & 0.167 & 0.173 \\ 
$10/2$ & 52 & 192 & 2 & 0.130 & 0.126 & 0.129 \\ 
$12/2$ & 66 & 255 & 2 & 0.107 & 0.105 & 0.107 \\ 
$14/2$ & 80 & 336 & 2 & 0.090 & 0.088 & 0.090 \\ 
\hline
$2/3$ & 12 & 5 & 2 & 0.909 & 0.694 & 0.827 \\ 
$4/3$ & 24 & 13 & 3 & 0.476 & 0.411 & 0.456 \\ 
$6/3$ & 36 & 89 & 3 & 0.258 & 0.234 & 0.252 \\ 
$8/3$ & 56 & 575 & 3 & 0.187 & 0.175 & 0.184 \\ 
$10/3$ & 66 & 1186 & 3 & 0.139 & 0.133 & 0.138 \\ 
$12/3$ & 86 & 1679 & 3 & 0.124 & 0.119 & 0.123 \\ 
$14/3$ & 102 & 2664 & 3 & 0.115 & 0.111 & 0.114 \\ 
\hline
$2/4$ & 14 & 5 & 2 & 0.909 & 0.667 & 0.827 \\ 
$4/4$ & 26 & 13 & 3 & 0.476 & 0.400 & 0.456 \\ 
$6/4$ & 42 & 129 & 4 & 0.252 & 0.226 & 0.246 \\ 
$8/4$ & 66 & 1222 & 4 & 0.207 & 0.189 & 0.203 \\ 
$10/4$ & 80 & 4300 & 4 & 0.161 & 0.151 & 0.159 \\ 
$12/4$ & 104 & 5408 & 4 & 0.181 & 0.172 & 0.179 \\ 
\hline
\end{tabular}
\end{table}

\begin{table}
\caption{Experimental results for Problem 2.}
\centering
\label{tab:masses_nu1}
\begin{tabular}{c||c|c|c||c|c|c}
$n_M/N$ & $\nc$ & $\R$ & $\tDepth$ &  $\ratCR{LR}{F}$ & $\rat{LR}{F}$ & $\ratMPC{LR}{F}$  \\
\hline
\hline
$2/2$ & 28 & 45 & 2 & 0.392 & 0.351 & 0.378 \\ 
$3/2$ & 40 & 161 & 2 & 0.220 & 0.206 & 0.217 \\ 
$4/2$ & 52 & 225 & 2 & 0.159 & 0.153 & 0.158 \\ 
$5/2$ & 64 & 229 & 2 & 0.131 & 0.127 & 0.130 \\ 
$6/2$ & 76 & 238 & 2 & 0.102 & 0.100 & 0.102 \\ 
$7/2$ & 88 & 239 & 2 & 0.087 & 0.086 & 0.087 \\ 
$8/2$ & 100 & 238 & 2 & 0.082 & 0.081 & 0.082 \\ 
\hline
$2/3$ & 38 & 127 & 3 & 0.393 & 0.341 & 0.379 \\ 
$3/3$ & 54 & 920 & 3 & 0.244 & 0.222 & 0.239 \\ 
$4/3$ & 70 & 1953 & 3 & 0.169 & 0.159 & 0.167 \\ 
$5/3$ & 86 & 2577 & 3 & 0.132 & 0.127 & 0.131 \\ 
$6/3$ & 102 & 2861 & 3 & 0.102 & 0.100 & 0.102 \\ 
$7/3$ & 118 & 3096 & 3 & 0.086 & 0.085 & 0.086 \\ 
$8/3$ & 134 & 3084 & 3 & 0.078 & 0.077 & 0.078 \\ 
\hline
$2/4$ & 48 & 282 & 4 & 0.406 & 0.336 & 0.388 \\ 
$3/4$ & 68 & 2593 & 4 & 0.275 & 0.242 & 0.268 \\ 
$4/4$ & 88 & 9479 & 4 & 0.203 & 0.187 & 0.200 \\ 
$5/4$ & 108 & 18707 & 4 & 0.148 & 0.140 & 0.146 \\ 
$6/4$ & 128 & 24629 & 4 & 0.111 & 0.108 & 0.111 \\ 
\hline
\end{tabular}
\end{table}

\begin{table}
\caption{Experimental results for Problem 3.}
\centering
\label{tab:masses_nu2}
\begin{tabular}{c||c|c|c||c|c|c}
$n_M/N$ & $\nc$ & $\R$ & $\tDepth$ &  $\ratCR{LR}{F}$ & $\rat{LR}{F}$ & $\ratMPC{LR}{F}$  \\
\hline
\hline
$2/2$ & 28 & 45 & 2 & 0.392 & 0.351 & 0.378 \\ 
$3/2$ & 40 & 161 & 2 & 0.220 & 0.206 & 0.217 \\ 
$4/2$ & 52 & 225 & 2 & 0.159 & 0.153 & 0.158 \\ 
$5/2$ & 64 & 229 & 2 & 0.131 & 0.127 & 0.130 \\ 
\hline
$2/3$ & 38 & 127 & 3 & 0.393 & 0.341 & 0.379 \\ 
$3/3$ & 54 & 920 & 3 & 0.244 & 0.222 & 0.239 \\ 
$4/3$ & 70 & 1953 & 3 & 0.169 & 0.159 & 0.167 \\ 
\hline
$2/4$ & 48 & 282 & 4 & 0.406 & 0.336 & 0.388 \\ 
$3/4$ & 68 & 2593 & 4 & 0.275 & 0.242 & 0.268 \\ 
\hline
\end{tabular}
\end{table}

\vspace{-10pt}
\section{Conclusions and Future Work}
\vspace{-4pt}
In this paper theory and algorithms for reducing the memory footprint when storing parametric solutions to \mpQP problems are introduced. This is performed by exploiting low rank structure in the parametric solutions. The structured changes in the parametric solution between neighboring critical regions is exploited in a similar way as low rank modifications is used as a tool to increase on-line performance in many popular \QP methods, but here it is applied to \mpQP problems to reduce the memory required to store the solutions. The proposed method stores the solution in a storage tree and can be implemented in already existing solvers for \mpQP problems, or be considered as a post-processing data compression step. For future work, an extension to other problem classes such as, \eg, multiparametric linear programming will be investigated. Furthermore, it will be studied which point location algorithm that benefits most by using the storage tree introduced in this paper, and also how to exploit low rank modifications of, \eg, Cholesky factorizations to improve the numerical properties.

\appendix
\section{Appendix}
\subsection{Linear algebra and definitions}
\label{app:lin_alg}
Consider the symmetric positive definite matrix $\begin{smallbmatrix}
\W & \w \\ \w^T & \wo
\end{smallbmatrix}$.

\vspace{1pt}
By using the matrix inversion lemma, the inverse of the block matrix is given by
\begin{equation}
\inv{\begin{smallbmatrix}
\W & \w \\ \w^T & \wo
\end{smallbmatrix}} \! = \begin{smallbmatrix}
\inv{\W} \! + \inv{\W} \w \inv{\C} \w^T \inv{\W} &  -\inv{\W}\w \inv{\C} \\ - \inv{\C} \w^T \inv{\W} & \inv{\C}
\end{smallbmatrix}, \label{eq:app:mtrx_inversion_lemma}
\end{equation} 
where $\C \triangleq \wo - \w^T \inv{\W} \w \in \posdefmats$.

The matrices in the \cftoc problem are defined by
\begin{subequations}
\label{eq:app:MPC_mtrx_def}
\begin{align}
&\timestack{A} \! \triangleq \! \begin{smallbmatrix}
I \\ A \\ \vdots \\ A^{\nx}
\end{smallbmatrix}, \, \timestack{B} \triangleq \begin{smallbmatrix}
0 \\
B \\
AB & B \\
\vdots \\
A^{\nx-1}B & A^{\nx-2}B & \cdots & B
\end{smallbmatrix}, \\
&\timestack{Q_x} \! \triangleq \textrm{diag}(\QMPC,...,\QMPC,\PN), \; \timestack{Q_u} \!\triangleq \! \textrm{diag}(\RMPC,...,\RMPC), \\
&\timestack{H_x} \! \triangleq \! \begin{smallbmatrix}
\HxMPC \\
 & \ddots \\
 & & \HxMPC
\end{smallbmatrix}\!, \, \timestack{H_u} \! \triangleq \!\begin{smallbmatrix}
\HuMPC \\
& \ddots \\
 & & \HuMPC
\end{smallbmatrix}\!, \timestack{h} \triangleq \begin{smallbmatrix}
\hMPC \\ \vdots \\ \hMPC
\end{smallbmatrix}.
\end{align}
\end{subequations}

\bibliography{axe_full,IEEEfull,ianFull}

\end{document}